\theoremstyle{plain}
\newtheorem{theorem}{Theorem}
\numberwithin{equation}{section}
\newcommand{\ra}{\rightarrow}
\newcommand{\CC}{\mathbb{C}}
\begin{document}

\title {Berezin expectation and Clifford trace}

\date{}

\author[P.L. Robinson]{P.L. Robinson}

\address{Department of Mathematics \\ University of Florida \\ Gainesville FL 32611  USA }

\email[]{paulr@ufl.edu}

\subjclass{} \keywords{}

\begin{abstract}

We show that in finite dimensions, the Berezin expectation on an exterior algebra corresponds to the trace on a Clifford algebra, the correspondence being mediated by what is effectively a version of antinormal ordering. 

\end{abstract}

\maketitle

\medbreak

\section*{Introduction}

\medbreak 

Berezin [1] introduced a `calculus' in which to frame the discussion of fermionic systems along lines that are remarkably similar to those involved in the standard discussion of bosonic systems. The details of this calculus were extracted and elaborated in invariant terms in [4]: among other things, to any (finite-dimensional) complex inner product space $V$ is associated a Berezin expectation $\mathbb{E} : \bigwedge (V_{\CC}) \ra \CC$, this being a linear functional defined on the exterior algebra of the complexified underlying real inner product space; among its many properties, this Berezin expectation facilitates construction of the standard (determinantal) inner product on the exterior algebra $\bigwedge (V)$ of $V$ itself. Quite separately, the  real inner product space underlying $V$ has an associated complex Clifford algebra $C(V_{\CC})$: among its many structural features this superalgebra carries a unique trace $\tau : C(V_{\CC}) \ra \CC$, this being a normalized linear functional that kills odd elements and satisfies the tracial identity $\tau (a b) = \tau (b a)$; [3] is a convenient reference for this. 

\medbreak 

Our aim here is to demonstrate that these two structures - the Berezin expectation $\mathbb{E}$ and the Clifford trace $\tau$ - are intimately related. In fact, we show that the complex structure of $V$ gives rise to a (super) linear isomorphism $\nu : \bigwedge (V_{\CC}) \ra C(V_{\CC})$ such that $\mathbb{E} = \tau \circ \nu$; the symbol $\nu$ is chosen because this map amounts to (anti-)normal ordering. As indicated above, the fundamental vector space is here taken to be finite-dimensional; the infinite-dimensional case will be treated in a subsequent paper.   

\medbreak 

\section*{Expectation and Trace} 

\medbreak 

Let $V$ be a real vector space upon which $( \bullet | \bullet )$ is an inner product. This inner product extends to the complexification $V_{\CC}$ both as a complex-bilinear form $(\bullet | \bullet)$ and as a complex inner product $\langle \bullet | \bullet \rangle$; these are related by 
$\langle x | y \rangle = ( \bar{x} | y)$
where an overbar signifies the canonical conjugation in $V_{\CC}$. We shall denote by $C(V_{\CC})$ the complex Clifford algebra over the complex vector space $V_{\CC}$ equipped with the bilinear form $( \bullet | \bullet )$. Explicitly, $C(V_{\CC})$ is generated by its subspace $V_{\CC}$ subject to the relations $z^2 = (z | z) {\bf 1}$ for every $z \in V_{\CC}$; by polarization, if $x, y \in V_{\CC}$ then 
$x y + y x = 2 (x | y) {\bf 1}.$ 
The algebra $C(V_{\CC})$ carries a number of natural structures: a unique automorphism $\Gamma$ that restricts to $V_{\CC} \subseteq C(V_{\CC})$ as minus the identity and makes $C(V_{\CC})$ into a superalgebra; a unique involution $^*$ that restricts to $V_{\CC} \subseteq C(V_{\CC})$ as the canonical conjugation; and a unique linear functional $\tau : C(V_{\CC}) \ra \CC$ that satisfies $\tau ({\bf 1}) = 1$, is $\Gamma$-invariant and has the tracial property $\tau (a b) = \tau (b a)$ for each $a, b \in C(V_{\CC})$. The involution $^*$ and the trace $\tau$ together equip $C(V_{\CC})$ with a complex inner product defined by the rule that if $a, b \in C(V_{\CC})$ then 
$\langle a | b \rangle = \tau (a^* b).$ 
The fact that we write $\langle \bullet | \bullet \rangle$ for two complex inner products here should not cause confusion: indeed, the tracially-defined inner product on $C(V_{\CC})$ restricts to the sesquilinear extension of $( \bullet | \bullet )$ on its subspace $V_{\CC}$. 

\medbreak 

Now, let $J: V \ra V$ be a $(\bullet | \bullet)$-orthogonal complex structure: $J$ makes $V$ into a complex vector space $V_J$ on which a complex inner product is defined by  
$\langle x | y \rangle_J = (x | y) + i (J x | y).$
The complexification $J_{\CC} : V_{\CC} \ra V_{\CC}$ engenders an eigendecomposition 
$$V_{\CC} = V_J^- \oplus V_J^+$$
where 
$$V_J^- = \{ w \in V_{\CC} : J_{\CC} w = - i w \} $$
and 
$$V_J^+ = \{ w \in V_{\CC} : J_{\CC} w = + i w \}.$$
Each of these eigenspaces is totally isotropic for the form $( \bullet | \bullet )$ on $V_{\CC}$; moreover, the eigenspaces are perpendicular for the inner product $\langle \bullet | \bullet \rangle$ on $V_{\CC}$. In fact, the map 
$$\gamma^+ : V_J \ra V_J^+ : v \mapsto v^+ = (v - i J v)/\sqrt{2}$$
is a unitary isomorphism from $V_J$ with $\langle \bullet | \bullet \rangle_J$ to $V_J^+ \subseteq V_{\CC}$ with $\langle \bullet | \bullet \rangle$ while 
$$\gamma^- : V_J \ra V_J^- : v \mapsto v^- = (v + i J v)/\sqrt{2}$$
is an antiunitary isomorphism in the corresponding sense. Notice that if $v \in V_J$ then $v^-$ is the conjugate of $v^+$ and that if $x, y \in V_J$ then 
$$(x^- | y^+) = \langle x | y \rangle_J.$$We extend $\gamma^+$ functorially to an isomorphism of exterior algebras: thus 
$$\gamma^+ : \bigwedge(V_J) \ra \bigwedge(V_J^+)$$
is a unitary isomorphism when the exterior algebras are given their standard (determinantal) inner products; similarly, we extend $\gamma^-$ functorially to an antiunitary {\it anti}-isomorphism 
$$\gamma^- : \bigwedge(V_J) \ra \bigwedge(V_J^-).$$

\medbreak 

The Berezin expectation $\mathbb{E} : \bigwedge (V_{\CC}) \ra \CC$ was introduced in [4] and inspired by the classical text [1]; the fundamentals may be summarized as follows. The sum 
$$\gamma = \sum_{m = 1}^M \gamma^+(v_m) \wedge \gamma^- (v_m) \in \bigwedge (V_{\CC})$$
is independent of the unitary basis $v_1, \dots , v_M$ for $V_J$ and 
$$\omega = \frac{1}{M!} (- \gamma)^M = \gamma^-(v_1) \wedge \gamma^+ (v_1) \wedge \cdots \wedge \gamma^-(v_M) \wedge \gamma^+ (v_M)$$
is a preferred unit vector relative to the canonical inner product on the exterior algebra $\bigwedge (V_{\CC})$. The Berezin expectation $\mathbb{E} (\zeta) \in \CC$ of $\zeta \in \bigwedge (V_{\CC})$ is now defined by 
$$\mathbb{P} (\zeta \wedge e^{- \gamma} ) = \mathbb{E} (\zeta) \omega$$
where $\mathbb{P}$ is the orthogonal projector from $\bigwedge (V_{\CC})$ to its top exterior power $\bigwedge^{2 M} (V_{\CC})$ and where exponentiation takes place in the exterior algebra. For our present purposes, the most significant property of the linear functional $\mathbb{E}$ may be stated as follows. 

\medbreak 

\begin{theorem} \label{BerIP}
If $\xi, \eta \in \bigwedge (V_J)$ then $\mathbb{E} (\gamma^- (\xi) \wedge \gamma^+ (\eta)) = \langle \xi | \eta \rangle_J.$
\end{theorem}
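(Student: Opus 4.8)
The plan is to reduce the identity to a computation on a basis and then carry out the resulting sign bookkeeping. First I observe that both sides are sesquilinear in the pair $(\xi, \eta)$: since $\gamma^-$ is antiunitary it is conjugate-linear, $\gamma^+$ is linear, and $\mathbb{E}$ is linear, so the left side is conjugate-linear in $\xi$ and linear in $\eta$; the inner product $\langle \xi | \eta \rangle_J$ has the same variance, as one checks from $\langle x | J y \rangle_J = i \langle x | y \rangle_J$ using the orthogonality of $J$. It therefore suffices to verify the identity when $\xi = v_I$ and $\eta = v_J$ run over the decomposable basis vectors $v_I = v_{i_1} \wedge \cdots \wedge v_{i_p}$ ($i_1 < \cdots < i_p$) attached to a fixed unitary basis $v_1, \dots, v_M$ of $V_J$, as $I, J \subseteq \{1, \dots, M\}$ vary. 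Writing $e_m = \gamma^-(v_m) = v_m^-$ and $f_m = \gamma^+(v_m) = v_m^+$, the right side becomes $\langle v_I | v_J \rangle_J = \det(\langle v_{i_a} | v_{j_b} \rangle_J) = \delta_{I J}$.

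For the left side I would expand $e^{-\gamma}$ in this basis. Since $-\gamma = \sum_m e_m \wedge f_m$ is a sum of mutually commuting, square-zero $2$-forms, one gets $e^{-\gamma} = \sum_{S} \bigwedge_{m \in S} (e_m \wedge f_m)$, the sum running over all subsets $S \subseteq \{1, \dots, M\}$. Because distinct exterior degrees are orthogonal, the projector $\mathbb{P}$ merely extracts the top-degree ($2M$) component, so $\mathbb{E}(\zeta)$ is the coefficient of $\zeta \wedge e^{-\gamma}$ against $\omega = e_1 \wedge f_1 \wedge \cdots \wedge e_M \wedge f_M$. Now $\gamma^-(v_I) = e_{i_p} \wedge \cdots \wedge e_{i_1}$, the order reversed because $\gamma^-$ is an anti-isomorphism, while $\gamma^+(v_J) = f_{j_1} \wedge \cdots \wedge f_{j_q}$; hence a term of $\zeta \wedge e^{-\gamma}$ is a wedge of the vectors $e_m$ ($m \in I \cup S$) together with $f_m$ ($m \in J \cup S$). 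For this to be nonzero and of top degree there can be no repeated vector and every index must occur, forcing $S$ disjoint from $I$ with $I \cup S = \{1, \dots, M\}$ and simultaneously $S$ disjoint from $J$ with $J \cup S = \{1, \dots, M\}$. This pins $S$ down as the common complement of $I$ and $J$, whence $I = J$; thus the left side already vanishes unless $I = J$, matching $\delta_{I J}$.

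It remains to show that when $I = J$ the surviving coefficient equals $1$. Here $\zeta = e_{i_p} \wedge \cdots \wedge e_{i_1} \wedge f_{i_1} \wedge \cdots \wedge f_{i_p}$ is a nested interleaving, and I must check that bringing it, together with the factor $\bigwedge_{m \in S}(e_m \wedge f_m)$ for $S$ the complement of $I$, into the standard order $e_1 \wedge f_1 \wedge \cdots \wedge e_M \wedge f_M = \omega$ produces the sign $+1$. The crux is that the nested pairing reorders to $\prod_{a}(e_{i_a} \wedge f_{i_a})$ with sign $+1$: peeling off the outermost pair $e_{i_p}, f_{i_p}$, the vector $e_{i_p}$ is carried rightward across an even-degree block at no sign cost, and induction on $p$ finishes the claim. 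Since the blocks $e_m \wedge f_m$ are commuting $2$-forms, assembling $\prod_{m \in I}(e_m \wedge f_m)$ with $\prod_{m \in S}(e_m \wedge f_m)$ reconstitutes $\omega$ with coefficient $1 = \langle v_I | v_I \rangle_J$.

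The main obstacle is precisely this sign computation: everything else is forced by degree and orthogonality considerations, but the value of the surviving coefficient hinges on the order-reversal built into $\gamma^-$. I expect the cleanest route is the induction just sketched, peeling pairs from the outside of the nested expression across even-degree blocks. It is this anti-isomorphism property of $\gamma^-$ — and not mere antilinearity — that arranges the interleaved factors so the sign is uniformly $+1$, allowing the Berezin expectation to reproduce the determinantal inner product exactly.
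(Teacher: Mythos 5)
Your proof is correct, but it takes a genuinely different route from the paper: the paper supplies no argument for this theorem at all, simply citing Theorem 1.7 of [4] (and the discussion thereafter), where the identity is established as part of the systematic development of the Berezin calculus. You instead verify the identity directly from the defining relation $\mathbb{P}(\zeta \wedge e^{-\gamma}) = \mathbb{E}(\zeta)\,\omega$. Your steps all check out: the reduction by sesquilinearity to decomposables $v_I$, $v_J$ attached to a unitary basis is legitimate (both sides are conjugate-linear in $\xi$ and linear in $\eta$); the expansion $e^{-\gamma} = \sum_S \bigwedge_{m \in S}(e_m \wedge f_m)$ holds because the $2$-forms $e_m \wedge f_m$ mutually commute and square to zero; the degree-and-repetition argument correctly forces $S$ to be the common complement of $I$ and $J$, so the top-degree component vanishes unless $I = J$; and the sign bookkeeping — peeling the outermost pair $e_{i_p}, f_{i_p}$ across an even-degree inner block — correctly yields $+1$. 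You are also right to emphasize that the order reversal $\gamma^-(v_I) = e_{i_p} \wedge \cdots \wedge e_{i_1}$, coming from $\gamma^-$ being an \emph{anti}-isomorphism, is what makes the sign uniformly $+1$: an order-preserving extension would instead produce $(-1)^{p(p-1)/2}$ and the theorem would fail. What your approach buys is a self-contained proof entirely within the paper's own definitions; what the paper's citation buys is brevity and consistency with [4], at the cost of requiring the reader to consult an external source. (One cosmetic point: your use of $J$ both for the complex structure and for an index set invites confusion and is worth renaming.)
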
 

\begin{proof} 
See [4] Theorem 1.7 and the discussion thereafter.
\end{proof} 

\medbreak 

It is important to remark here that the decomposition $V_{\CC} = V_J^- \oplus V_J^+$ naturally induces an isomorphism 
$$\bigwedge (V_{\CC}) \ra \bigwedge (V_J^-) \otimes \bigwedge (V_J^+)$$
of superalgebras, the tensor product being super; the inverse isomorphism maps $\gamma^-(\xi) \otimes \gamma^+ (\eta)$ to $\gamma^-(\xi) \wedge \gamma^+ (\eta)$ whenever $\xi, \eta \in \bigwedge (V_J)$. For this, the reader may refer to Section 5 of Chapter V in [2]. It follows that each element of $\bigwedge (V_{\CC})$ is actually a finite linear combination of the wedge products $\gamma^- (\xi) \wedge \gamma^+ (\eta)$ that appear in this theorem. 

\medbreak 

The effect of the decomposition $V_{\CC} = V_J^- \oplus V_J^+$ on the Clifford algebras is different. The subalgebra of $C(V_{\CC})$ generated by any complex subspace $W$ of $V_{\CC}$ is canonically the Clifford algebra $C(W)$ of $W$ equipped with the restriction of $(\bullet | \bullet)$ as symmetric bilinear form. In particular, the subalgebra of $C(V_{\CC})$ generated by the isotropic subspace $V_J^+$ is the Clifford algebra of $V_J^+$ equipped with the identically zero form: that is, $C(V_J^+)$ is simply the exterior algebra $\bigwedge (V_J^+)$; likewise, $\bigwedge (V_J^-) = C(V_J^-) \subseteq C(V_{\CC})$. In general, each direct sum decomposition $V_{\CC} = X \oplus Y$ gives rise to an isomorphism 
$$C(X) \otimes C(Y) \ra C(V_{\CC}): a \otimes b \mapsto a b $$
of super vector spaces, the tensor product again being super and the action being indicated on decomposables; this is an isomorphism of superalgebras when the direct sum decomposition is $(\bullet | \bullet)$-orthogonal. For a discussion of such matters, see Section 3 of Chapter 5 in [5]. In the present situation, we arrive at a super vector space isomorphism 
$$\bigwedge (V_J^-) \otimes \bigwedge (V_J^+) \ra C(V_{\CC}) : \gamma^-(\xi) \otimes \gamma^+ (\eta) \mapsto \gamma^-(\xi) \gamma^+ (\eta).$$

\medbreak 

Composition now yields a $J$-dependent isomorphism of super vector spaces 
$$\nu = \nu_J :  \bigwedge (V_{\CC}) \ra C(V_{\CC}).$$

\medbreak

A couple of simple examples will illustrate the nature of this isomorphism. For the statements, we prefer to drop the inner product subscript $J$ and write $\langle \bullet | \bullet \rangle_J$ simply as $\langle \bullet | \bullet \rangle$.

\medbreak 

\begin{theorem} \label{nor2} 
If $x, y \in V_J$ then 
$$\nu (x \wedge y) = x y - \langle y | x \rangle {\bf 1}.$$
\end{theorem}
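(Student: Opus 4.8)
The plan is to unwind the two isomorphisms that constitute $\nu$ and then reduce everything to the Clifford anticommutation relations. First I would record the decomposition of the generators: since $v^+ + v^- = \sqrt{2}\, v$ for every $v \in V_J$, we may write $x = (x^- + x^+)/\sqrt{2}$ and $y = (y^- + y^+)/\sqrt{2}$ inside $V_{\CC} \subseteq \bigwedge(V_{\CC})$. Expanding the wedge product then gives
$$x \wedge y = \tfrac{1}{2}\bigl( x^- \wedge y^- + x^- \wedge y^+ + x^+ \wedge y^- + x^+ \wedge y^+ \bigr),$$
so the task becomes to apply $\nu$ term by term.

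Next I would push each term through the superalgebra isomorphism $\bigwedge(V_{\CC}) \ra \bigwedge(V_J^-) \otimes \bigwedge(V_J^+)$ and then through Clifford multiplication. The terms $x^- \wedge y^-$, $x^- \wedge y^+$ and $x^+ \wedge y^+$ are already in the canonical order, with all minus-factors preceding all plus-factors, so they map respectively to $x^- y^-$, $x^- y^+$ and $x^+ y^+$ in $C(V_{\CC})$; here I use that on each isotropic eigenspace $V_J^{\pm}$ the Clifford product agrees with the wedge product. The only term requiring care is $x^+ \wedge y^-$, which is out of order: rewriting it as $-\,y^- \wedge x^+$, equivalently tracking the sign dictated by the super tensor product, sends it to $-\,y^- x^+$. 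Collecting the four contributions yields
$$\nu(x \wedge y) = \tfrac{1}{2}\bigl( x^- y^- + x^- y^+ - y^- x^+ + x^+ y^+ \bigr).$$

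Finally I would compare this with the ordinary Clifford product $xy = \tfrac{1}{2}(x^- + x^+)(y^- + y^+)$. Subtraction kills all but the cross-term, leaving $2\,\nu(x \wedge y) - 2xy = -(x^+ y^- + y^- x^+)$. The Clifford polarization identity evaluates the anticommutator as $x^+ y^- + y^- x^+ = 2(x^+ | y^-)\mathbf{1}$, and by symmetry of the bilinear form together with the identity $(y^- | x^+) = \langle y | x \rangle$ recorded above, this equals $2\langle y | x\rangle\mathbf{1}$. Dividing by two gives $\nu(x \wedge y) = xy - \langle y | x\rangle\mathbf{1}$, as claimed.

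I expect the one genuinely delicate point to be the sign on the $x^+ \wedge y^-$ term: it is exactly the super sign forced by reordering an odd plus-generator past an odd minus-generator when passing to the ordered tensor decomposition, and it is precisely this sign that produces a single surviving contraction term rather than a vanishing or doubled one. Everything else is a routine application of the anticommutation relations and the isotropy of the eigenspaces.
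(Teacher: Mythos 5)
Your proof is correct and takes essentially the same route as the paper's own: decompose $x$ and $y$ into their $V_J^{\mp}$ components, expand the wedge product, reorder the single out-of-order term $x^+ \wedge y^-$ to $-\,y^- \wedge x^+$, apply $\nu$ term by term, and compare with the Clifford product $xy$ using polarization and the identity $(y^- | x^+) = \langle y | x \rangle_J$. The only difference is one of exposition: you make explicit the polarization step and the symmetry of the bilinear form, which the paper compresses into a single line.
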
 

\begin{proof} 
Observe that $\sqrt{2} \: x = x^- + x^+$ and $\sqrt{2} \: y = y^- + y^+$ whence 
\begin{eqnarray*}
2 \: ( x \wedge y) & = & x^- \wedge y^- + x^- \wedge y^+ + x^+ \wedge y^- + x^+ \wedge y^+ \\ & = & x^- \wedge y^- + x^- \wedge y^+ - y^- \wedge x^+ + x^+ \wedge y^+
\end{eqnarray*}
and therefore 
\begin{eqnarray*} 
2 \: \nu( x \wedge y ) & = & x^-  y^- + x^-  y^+ - y^-  x^+ + x^+  y^+ \\ & = & (x^- + x^+) (y^- + y^+) - x^+ y^- - y^- x^+ \\ & = & 2 x y \; - \; 2 (y^- | x^+ )\\ & = & 2 x y - 2 \langle y | x \rangle_J.
\end{eqnarray*}
\end{proof} 

\medbreak 

\begin{theorem} \label{nor3}
If $x, y, z \in V_J$ then 
$$\nu (x \wedge y \wedge z) = x y z - \langle z | y \rangle x + \langle z | x \rangle y - \langle y | x \rangle z.$$
\end{theorem}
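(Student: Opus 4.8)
The plan is to imitate the computation that establishes Theorem \ref{nor2}, the only genuinely new feature being the increased amount of bookkeeping. First I would write $\sqrt{2}\, x = x^- + x^+$, $\sqrt{2}\, y = y^- + y^+$ and $\sqrt{2}\, z = z^- + z^+$ and expand
$$2 \sqrt{2} \, (x \wedge y \wedge z) = (x^- + x^+) \wedge (y^- + y^+) \wedge (z^- + z^+)$$
into its eight exterior monomials, each a threefold wedge carrying a definite pattern of superscripts. To apply $\nu$ to such a monomial I would first bring it into standard block form (all minus-factors to the left, all plus-factors to the right), recording the sign incurred by the transposition of wedge factors, and then replace each wedge by the corresponding Clifford product; this is legitimate because $\nu$ carries $\gamma^-(\xi) \wedge \gamma^+(\eta)$ to $\gamma^-(\xi) \, \gamma^+(\eta)$ and because the exterior and Clifford products agree on each (isotropic) eigenspace $V_J^{\pm}$. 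The result is an explicit expression for $2 \sqrt{2} \, \nu(x \wedge y \wedge z)$ as a sum of eight Clifford monomials.

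Separately I would expand the Clifford product
$$2 \sqrt{2} \, x y z = (x^- + x^+)(y^- + y^+)(z^- + z^+)$$
into its eight Clifford monomials, indexed by the same superscript patterns. Comparing the two expansions pattern by pattern, the four ``pure-block'' patterns $(---)$, $(--+)$, $(-++)$ and $(+++)$ contribute identically to both sides, so it remains only to reconcile the four mixed patterns $(-+-)$, $(+--)$, $(+-+)$ and $(++-)$. For each of these I would transpose the offending factors into block order within $C(V_{\CC})$ using the defining relation in the form $a^+ b^- + b^- a^+ = 2 (a^+ | b^-) {\bf 1}$ together with the isotropy identities $a^+ b^+ = - b^+ a^+$ and $a^- b^- = - b^- a^-$; each passage of a factor past one of opposite type spawns a scalar anticommutator, which I would rewrite via $(a^+ | b^-) = \langle b | a \rangle_J$ (equivalently $(a^- | b^+) = \langle a | b \rangle_J$). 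Thus in each mixed pattern the Clifford monomial occurring in $xyz$ differs from the corresponding monomial occurring in $\nu(x \wedge y \wedge z)$ by a linear combination of the vectors $x^{\pm}, y^{\pm}, z^{\pm}$.

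Finally I would gather these correction terms. Summing over the four mixed patterns, the coefficient of $x^-$ matches that of $x^+$, and similarly for $y$ and $z$, so the corrections recombine into multiples of $x^- + x^+ = \sqrt{2}\, x$, of $y^- + y^+ = \sqrt{2}\, y$ and of $z^- + z^+ = \sqrt{2}\, z$; concretely the combined coefficients are $- \langle z | y \rangle_J$, $+ \langle z | x \rangle_J$ and $- \langle y | x \rangle_J$ respectively. After dividing by $2 \sqrt{2}$ this yields exactly
$$\nu(x \wedge y \wedge z) = x y z - \langle z | y \rangle \, x + \langle z | x \rangle \, y - \langle y | x \rangle \, z.$$
The principal obstacle is entirely combinatorial: keeping the transposition signs and the argument order of the pairing straight, since the two eigenspaces enter asymmetrically and the sesquilinear form $\langle \bullet | \bullet \rangle_J$ is sensitive to the order of its slots. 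Once the four mixed patterns are processed with care, the recombination into $\sqrt{2}\, x$, $\sqrt{2}\, y$, $\sqrt{2}\, z$ is automatic and delivers the asserted signs.
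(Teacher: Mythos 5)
Your proposal is correct and is essentially the paper's own proof: the paper likewise expands everything in terms of $x^{\pm}, y^{\pm}, z^{\pm}$, observes that the four pure-block patterns match the corresponding terms of $xyz$, and reconciles the four mixed patterns via the Clifford anticommutation relations, obtaining exactly the corrections $-2\langle y | x \rangle z^{\pm}$, $+2\langle z | x \rangle y^{\pm}$, $-2\langle z | y \rangle x^{\pm}$ that recombine into the stated formula. One minor bookkeeping slip in your sketch: the combined coefficients of $x^- + x^+$, $y^- + y^+$, $z^- + z^+$ are $-2\langle z | y \rangle_J$, $+2\langle z | x \rangle_J$, $-2\langle y | x \rangle_J$ (each anticommutator contributes a factor of $2$), which is precisely what the final division by $2\sqrt{2}$ requires; with the coefficients as you state them, that division would leave a spurious factor of $\tfrac{1}{2}$.
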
 

\begin{proof} 
This is entirely similar to the previous proof. After multiplication by $ 2 \sqrt{2}$, the difference $\nu (x \wedge y \wedge z) - x y z$ reduces to the sum of four terms that may be rewritten using the relations in the Clifford algebra, as
\begin{eqnarray*}
- y^- x^+ z^+ - x^+ y^- z^+ & = & - 2 \langle y | x \rangle z^+ \\
+ z^- x^+ y^+ - x^+ y^+ z^- & = & + 2 \langle z | x \rangle y^+ - 2 \langle z | y \rangle x^+ \\
+ y^- z^- x^+ - x^+ y^- z^- & = & + 2 \langle z | x \rangle y^- - 2 \langle y | x \rangle z^- \\
- x^- z^- y^+ - x^- y^+ z^- & = & - 2 \langle z | y \rangle x^-
\end{eqnarray*}
and these four terms sum correctly to yield the announced formula.  
\end{proof} 

\medbreak 

In effect, $\nu$ amounts to a version of antinormal ordering. 

\medbreak 

We are now in a position to show that under this version of antinormal ordering, the Berezin expectation on $\bigwedge (V_{\CC})$ corresponds to the trace on $C(V_{\CC})$. 

\medbreak 

\begin{theorem} \label{main} 
$\mathbb{E} = \tau \circ \nu.$
\end{theorem}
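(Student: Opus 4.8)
The plan is to show that the two linear functionals $\mathbb{E}$ and $\tau \circ \nu$ agree on the wedge products $\gamma^-(\xi) \wedge \gamma^+(\eta)$ with $\xi, \eta \in \bigwedge(V_J)$, since by the remark following Theorem \ref{BerIP} these span $\bigwedge(V_{\CC})$. On the left, Theorem \ref{BerIP} gives $\mathbb{E}(\gamma^-(\xi) \wedge \gamma^+(\eta)) = \langle \xi | \eta \rangle_J$. On the right, the very definition of $\nu$ as the composite of the super exterior-algebra isomorphism $\bigwedge(V_{\CC}) \ra \bigwedge(V_J^-) \otimes \bigwedge(V_J^+)$ with the super Clifford isomorphism $\bigwedge(V_J^-) \otimes \bigwedge(V_J^+) \ra C(V_{\CC})$ sends $\gamma^-(\xi) \wedge \gamma^+(\eta)$ to the Clifford product $\gamma^-(\xi)\, \gamma^+(\eta)$. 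Thus everything reduces to the identity $\tau(\gamma^-(\xi)\, \gamma^+(\eta)) = \langle \xi | \eta \rangle_J$ for all $\xi, \eta \in \bigwedge(V_J)$.

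My first move on this identity would be to recognize the left-hand side as a Clifford inner product. Since $^*$ is an antiautomorphism restricting to the canonical conjugation on $V_{\CC}$, and since $v^-$ is the conjugate of $v^+$ for $v \in V_J$, applying $^*$ to a product $v_1^+ \cdots v_p^+$ reverses the factors and conjugates each, yielding $v_p^- \cdots v_1^-$; because $\gamma^-$ was built as an \emph{anti}-isomorphism it produces exactly the same reversed product, so $(\gamma^+(\eta))^* = \gamma^-(\eta)$ with no residual sign. Hence $\tau(\gamma^-(\xi)\,\gamma^+(\eta)) = \tau((\gamma^+(\xi))^* \gamma^+(\eta)) = \langle \gamma^+(\xi) | \gamma^+(\eta) \rangle$, the tracially-defined inner product on $C(V_{\CC})$. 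On the other hand $\gamma^+$ is a unitary isomorphism of exterior algebras for the determinantal inner products, so $\langle \xi | \eta \rangle_J = \langle \gamma^+(\xi) | \gamma^+(\eta) \rangle$ computed inside $\bigwedge(V_J^+)$. The whole theorem therefore collapses to a single comparison: on the subspace $\bigwedge(V_J^+) = C(V_J^+) \subseteq C(V_{\CC})$, the Clifford (trace) inner product coincides with the determinantal inner product induced by $\langle \bullet | \bullet \rangle$.

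To settle this comparison I would fix a unitary basis $v_1, \dots, v_M$ of $V_J$, so that the vectors $v_m^+$ are orthonormal in $V_J^+$ and obey the anticommutation relations $v_m^+ v_n^+ + v_n^+ v_m^+ = 0$ and $v_m^- v_n^+ + v_n^+ v_m^- = 2\langle v_m | v_n\rangle_J {\bf 1} = 2\delta_{mn}{\bf 1}$, these following from the isotropy of $V_J^+$ and from $(v_m^- | v_n^+) = \langle v_m | v_n \rangle_J$. Writing $v_S^+$ for the ordered monomial attached to a subset $S$, the $v_S^+$ form an orthonormal basis of $\bigwedge(V_J^+)$ for the determinantal inner product, so it suffices to prove $\tau((v_S^+)^* v_T^+) = \delta_{S,T}$. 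The vanishing whenever $|S| \neq |T|$ is best obtained from a symmetry argument: the assignment $v^+ \mapsto e^{i\theta} v^+$, $v^- \mapsto e^{-i\theta} v^-$ extends to an automorphism $\alpha_\theta$ of $C(V_{\CC})$ that preserves all the relations above and commutes with $\Gamma$, so $\tau \circ \alpha_\theta$ is again a normalized, $\Gamma$-invariant trace and must equal $\tau$ by uniqueness; applying this invariance to $(v_S^+)^* v_T^+$, which $\alpha_\theta$ scales by $e^{i\theta(|T| - |S|)}$, forces the trace to vanish unless $|S| = |T|$. For $|S| = |T|$ I would anticommute a single factor $v_m^-$ to the right through the creation string using the relations above, collect the Kronecker contractions, and close the computation with the tracial identity, inducting on the length of the monomials.

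I expect the equal-degree orthonormality in the last paragraph to be the only genuine obstacle; the reduction steps are formal, turning on the clean bookkeeping identity $(\gamma^+(\eta))^* = \gamma^-(\eta)$, which is precisely what the choice of $\gamma^-$ as an anti-isomorphism was engineered to make sign-free. Everything else is an exercise in the canonical anticommutation relations together with the defining properties of $\tau$.
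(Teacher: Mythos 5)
Your proposal is correct, and its skeleton coincides with the paper's proof: both reduce to the spanning family $\gamma^-(\xi) \wedge \gamma^+(\eta)$, apply Theorem \ref{BerIP} on the Berezin side, observe that $\nu$ sends $\gamma^-(\xi) \wedge \gamma^+(\eta)$ to the Clifford product $\gamma^-(\xi)\, \gamma^+(\eta)$, and convert the trace of that product into an inner product via the sign-free identity $(\gamma^+(\xi))^* = \gamma^-(\xi)$. The divergence is at the last step. The paper simply chains the equalities
$$\langle \xi | \eta \rangle_J \; = \; \langle \gamma^+(\xi) | \gamma^+(\eta) \rangle \; = \; \tau\bigl(\gamma^+(\xi)^* \gamma^+(\eta)\bigr),$$
in which the first $\langle \bullet | \bullet \rangle$ is the determinantal inner product on $\bigwedge(V_J^+)$ (unitarity of $\gamma^+$) while the second equality invokes the definition of the tracially-defined inner product on $C(V_{\CC})$; it thereby identifies, without comment, the determinantal and restricted tracial inner products on the subspace $\bigwedge(V_J^+) = C(V_J^+)$ --- a true fact, but one the paper's own remarks establish only in degree one (the restriction of the tracial inner product to $V_{\CC}$), the general case being left implicitly to [3]. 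You correctly isolate this identification as the real mathematical content and prove it: degree separation via the gauge automorphisms $\alpha_\theta$ together with uniqueness of the normalized $\Gamma$-invariant trace, then equal-degree orthonormality $\tau((v_S^+)^* v_T^+) = \delta_{S,T}$ by anticommuting an annihilator through the creator string and closing with the tracial identity. Both pieces check out; in particular the factor $2$ in the contraction $v_m^- v_n^+ + v_n^+ v_m^- = 2\delta_{mn} {\bf 1}$ is halved exactly by the cyclic step, since traciality and anticommutation of the $v^-$ among themselves give $\tau(A\, v_{t_1}^+ \cdots v_{t_p}^+ \, v_{s_1}^-) = -\,\tau(A\, v_{s_1}^- v_{t_1}^+ \cdots v_{t_p}^+)$, so the recursion reproduces the Laplace expansion of the Gram determinant. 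In short, your argument and the paper's are the same reduction; what yours buys is self-containedness, deriving the trace--determinant compatibility from the defining properties of $\tau$ alone, where the paper's four-equality chain quietly presupposes it.
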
 

\begin{proof} 
If $\xi, \eta \in \bigwedge (V_J)$ then $\nu (\gamma^-(\xi) \wedge \gamma^+ (\eta)) = \gamma^- (\xi) \gamma^+ (\eta)$ while 
$$\mathbb{E} (\gamma^-(\xi) \wedge \gamma^+ (\eta)) = \langle \xi | \eta \rangle_J = \langle \gamma^+(\xi) | \gamma^+ (\eta) \rangle = \tau (\gamma^+(\xi)^* \gamma^+ (\eta)) = \tau (\gamma^- (\xi) \gamma^+ (\eta)).$$
Here: the first equality comes from Theorem \ref{BerIP}; the second holds because $\gamma^+$ is a unitary isomorphism; the third holds by definition of the (tracial) inner product on the Clifford algebra; and the fourth holds as the involution converts $\gamma^+$ to $\gamma^-$. 
\end{proof} 

\medbreak 

In other words $\nu$, $\mathbb{E}$ and $\tau$ fit into the following commutative diagram: 

\begin{diagram}
\bigwedge (V_{\CC})         &\rTo_{\nu}   &C(V_{\CC})\\
\dTo_{\mathbb{E}}  &           &\dTo^{\tau}\\
\CC         &\rTo^{=}   &\CC
\end{diagram}

\bigbreak 

\medbreak 

We close with a couple of brief remarks. 

\medbreak 

For ease of reference, our account of the Berezin expectation is taken from [4]; its conventions lead to the involvement of antinormal ordering. It is possible to alter conventions so as to involve normal ordering instead: in fact, reversal of the two-form $\gamma \in \bigwedge^{2 M} (V_{\CC})$ leads to a new Berezin expectation that satisfies 
$$\mathbb{E} (\gamma^+ (\eta) \wedge \gamma^- (\xi)) = \langle \xi | \eta \rangle_J = \tau (\gamma^+ (\eta)  \gamma^- (\xi))$$
while the new version of normal ordering  
$$\bigwedge (V_{\CC}) \ra \bigwedge (V_J^+) \otimes \bigwedge (V_J^-) = C (V_J^+) \otimes C (V_J^-) \ra C(V_{\CC})$$ 
replaces the formula in Theorem \ref{nor3} by 
$$\nu (x \wedge y \wedge z) = x y z - \langle y | z \rangle x + \langle x | z \rangle y - \langle x | y \rangle z$$ 
and the formula in Theorem \ref{nor2} by 
$$\nu (x \wedge y) = x y - \langle x | y \rangle {\bf 1}.$$

\medbreak 

Throughout the present paper, we have considered exclusively the case in which the vector space $V$ is finite-dimensional; the case in which $V$ is infinite-dimensional calls for more careful handling and will be considered in a subsequent paper. 

\medbreak 

\bigbreak

\begin{center} 
{\small R}{\footnotesize EFERENCES}
\end{center} 
\medbreak 

[1] F.A. Berezin, {\it The Method of Second Quantization}, Academic Press (1966).  

\medbreak 

[2] W.H. Greub, {\it Multilinear Algebra}, Springer-Verlag (1967). 

\medbreak 

[3] R.J. Plymen and P.L. Robinson, {\it Spinors in Hilbert Space}, Cambridge Tracts in Mathematics {\bf 114}, Cambridge University Press (1994). 

\medbreak 

[4] P.L. Robinson, {\it The Berezin Calculus}, Publ. RIMS, Kyoto University {\bf 35} (1999) 123-194. 

\medbreak

[5] V.S. Varadarajan, {\it Supersymmetry for Mathematicians: An Introduction}, Courant Lecture Notes in Mathematics {\bf 11}, American Mathematical Society (2004).

\medbreak 

\medbreak

\end{document}